\newtheorem{theorem}{Theorem}
\newtheorem{definition}[theorem]{Definition}
\newtheorem{example}[theorem]{Example}
\newcommand{\Z}{\mathbb{Z}}
\newcommand{\F}{\mathbb{F}}
\long\def\symbolfootnote[#1]#2{\begingroup%
\def\thefootnote{\fnsymbol{footnote}}\footnote[#1]{#2}\endgroup}
\begin{document}

\title{Costas cubes}
\author{Jonathan Jedwab \and Lily Yen}
\date{16 February 2017 (revised 3 August 2017)}
\maketitle

\symbolfootnote[0]{
J.~Jedwab is with Department of Mathematics, 
Simon Fraser University, 8888 University Drive, Burnaby BC V5A 1S6, Canada.
\par
L.~Yen is with Department of Mathematics and Statistics,
Capilano University, 2055 Purcell Way, North Vancouver BC V7J 3H5, Canada
and Department of Mathematics, 
Simon Fraser University, 8888 University Drive, Burnaby BC V5A 1S6, Canada.
\par
J.~Jedwab is supported by NSERC.
\par
Email: {\tt jed@sfu.ca}, {\tt lyen@capilanou.ca}
\par
}

\begin{abstract}
A Costas array is a permutation array for which the vectors joining pairs of $1$s are all distinct. 
We propose a new three-dimensional combinatorial object related to Costas arrays: an order $n$ \emph{Costas cube} is an array $(d_{i,j,k})$ of size $n \times n \times n$ over $\Z_2$ for which each of the three projections of the array onto two dimensions, namely 
$(\sum_i d_{i,j,k})$ and $(\sum_j d_{i,j,k})$ and $(\sum_k d_{i,j,k})$,
is an order $n$ Costas array.
We determine all Costas cubes of order at most $29$, showing that Costas cubes exist for all these orders except $18$ and $19$ and that a significant proportion of the Costas arrays of certain orders occur as projections of Costas cubes. We then present constructions for four infinite families of Costas cubes. 
\end{abstract}

\section{Introduction}
\label{sec:introduction}
We write $I[X]$ for the indicator function of condition~$X$ (so $I[X] = 1$ if $X$ is true, and 0 otherwise).
Let $\sigma \in \mathcal{S}_n$ be a permutation on $\{1,2,\dots,n\}$. The \emph{permutation array} $(s_{i,j})$ corresponding to $\sigma$ is the $n \times n$ array given by $s_{i,j} =I[ \sigma(j) = i]$, where the indices $i$ and $j$ range over $\{1,2,\dots,n\}$.
For example, representing index $i$ as increasing from left to right and index $j$ as increasing from bottom to top, the permutation array corresponding to the permutation $(3,5,4,2,6,1) \in S_6$ is
\begin{center}
\vspace{1em}
\begin{tikzpicture}[yscale=-1,scale=0.5]
   \foreach \i/\j in {1/1, 2/3, 3/6, 4/4, 5/5, 6/2}
      \fill[black!20] (\i,\j) rectangle +(1,1);
   \draw (1,1) grid (7,7);
   \foreach \i in {1,...,6} {
      \node[above] at (\i+1/2,8) {$\i$};
      \node[left] at (1,7-\i+1/2) {$\i$};
   \node[above] at (4.5,9) {$i$};
   \node[left] at (0,3.5) {$j$};
   }
\end{tikzpicture}
\end{center}
where 1 entries of the permutation array are represented as shaded squares.

A permutation array $(s_{i,j})$ of order $n$ is a \emph{Costas array} if the vectors formed by joining pairs of 1s in $(s_{i,j})$ are all distinct. J.P.\ Costas introduced these arrays in $1965$ in order to improve the performance of radar and sonar systems \cite{DrakOpen11}: the radar or sonar frequency $f_i$ is transmitted in time interval $t_j$ if and only if $s_{i,j}=1$. An equivalent definition of a Costas array is a permutation array each of whose out-of-phase aperiodic autocorrelations is at most~$1$.

Each Costas array belongs to an equivalence class formed by its orbit under the action of the dihedral group~$D_4$ (the symmetry group of a square under rotation and reflection). The equivalence class of a Costas array of order greater than 2 has size four or eight, depending on whether or not its elements have reflective symmetry about a diagonal.

In 2008, Drakakis \cite{Drak08Higher} proposed a generalization of Costas arrays to dimensions other than two, based on aperiodic autocorrelations, and gave further details in~\cite{Drak10Higher}. This viewpoint has the advantage that the one-dimensional case corresponds to a Golomb ruler \cite{Babcock}, and was subsequently studied in~\cite{NazarioOrtiz12}. However, the associated generalization of a permutation to more than two dimensions is problematic when the number of dimensions is odd, and the classical constructions of Costas arrays due to Gilbert-Welch and Golomb (see Theorems~\ref{thm:gilbertwelch} and~\ref{thm:golomb}) do not seem to generalize in a natural way.

We instead propose a different generalization of Costas arrays to three dimensions, which depends directly on two-dimensional Costas arrays.
\begin{definition}
The \emph{projections} of a three-dimensional array $(d_{i,j,k})$ are Projection $A =(a_{i,j}) = (\sum_k d_{i,j,k})$, Projection $B= (b_{i,k}) =(\sum_j d_{i,j,k})$, and Projection $C = (c_{j,k}) = (\sum_i d_{i,j,k})$.
\end{definition}

We call a multi-dimensional array whose entries all lie in $\{0,1\}$ an \emph{array over $\Z_2$}.
\begin{definition}
\label{defn:Costascube}
An order $n$ \emph{Costas cube} is an $n\times n \times n$ array over $\Z_2$ for which Projections $A$, $B$, $C$ are each order $n$ Costas arrays.
\end{definition}

For example, let $D = (d_{i,j,k})$ be the $6\times6\times6$ array given by
\[
d_{i,j,k} = I\big[(i,j,k) \in \{(1,6,4),\, (2,4,6),\, (3,1,2),\, (4,3,1),\, (5,2,5),\, (6,5,3)\}\big].
\]
Then $D$ is an order 6 Costas cube, and the Costas permutations corresponding to Projections $A$, $B$, $C$ are $(3,5,4,2,6,1)$, $(4,3,6,1,5,2)$, $(3,1,5,6,2,4)$, respectively. The Costas cube $D$ and its three associated projections are shown in Figure~\ref{fig:cube}, where 1 entries of $D$ are represented by shaded cubes of size $1 \times 1 \times 1$.  
(A three-dimensional array that is a Costas cube according to Definition~\ref{defn:Costascube} is also a Costas cube according to the definition of Drakakis \cite{Drak08Higher}, \cite{Drak10Higher}: if the multiset of vectors joining pairs of 1 entries in such an array contains a repeat, so does the multiset of vectors joining pairs of 1 entries for each of its three projections.)

\begin{figure}[htbp]
\begin{center}
\includegraphics[width=0.8\textwidth]{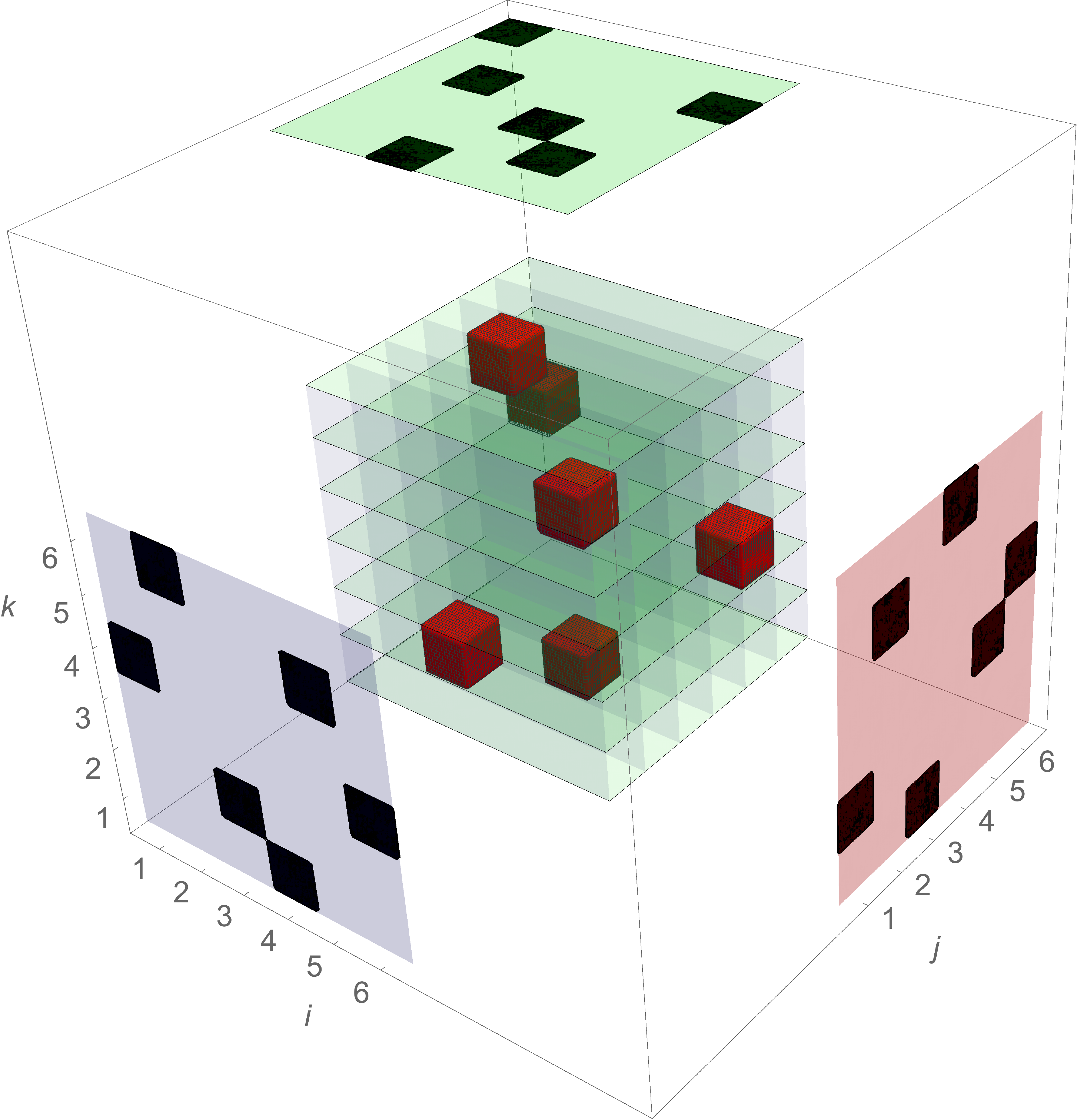}
\vspace{2em} \\
\includegraphics[width=0.3\textwidth]{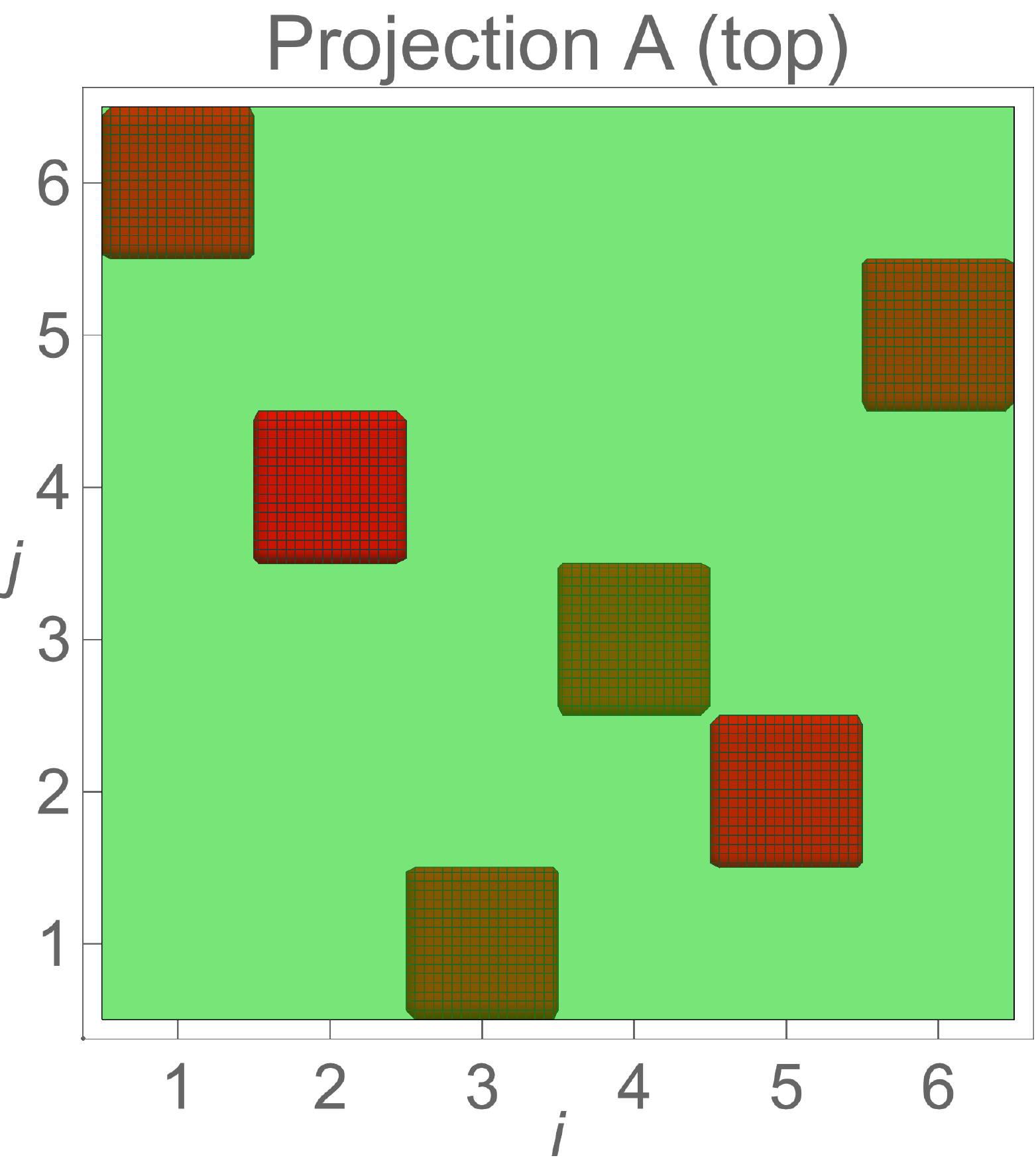} \hspace{1em}
\includegraphics[width=0.3\textwidth]{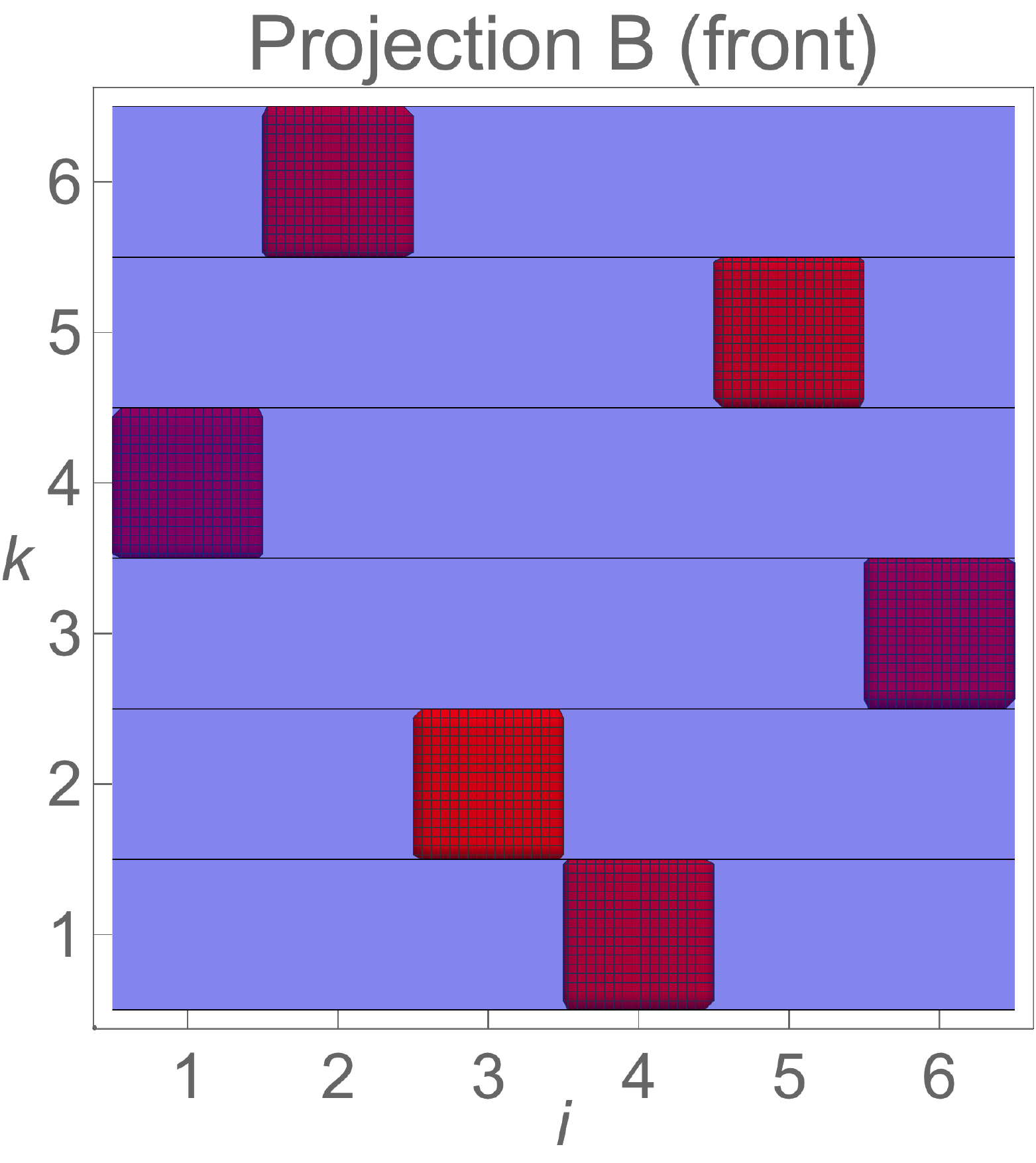} \hspace{1em}
\includegraphics[width=0.3\textwidth]{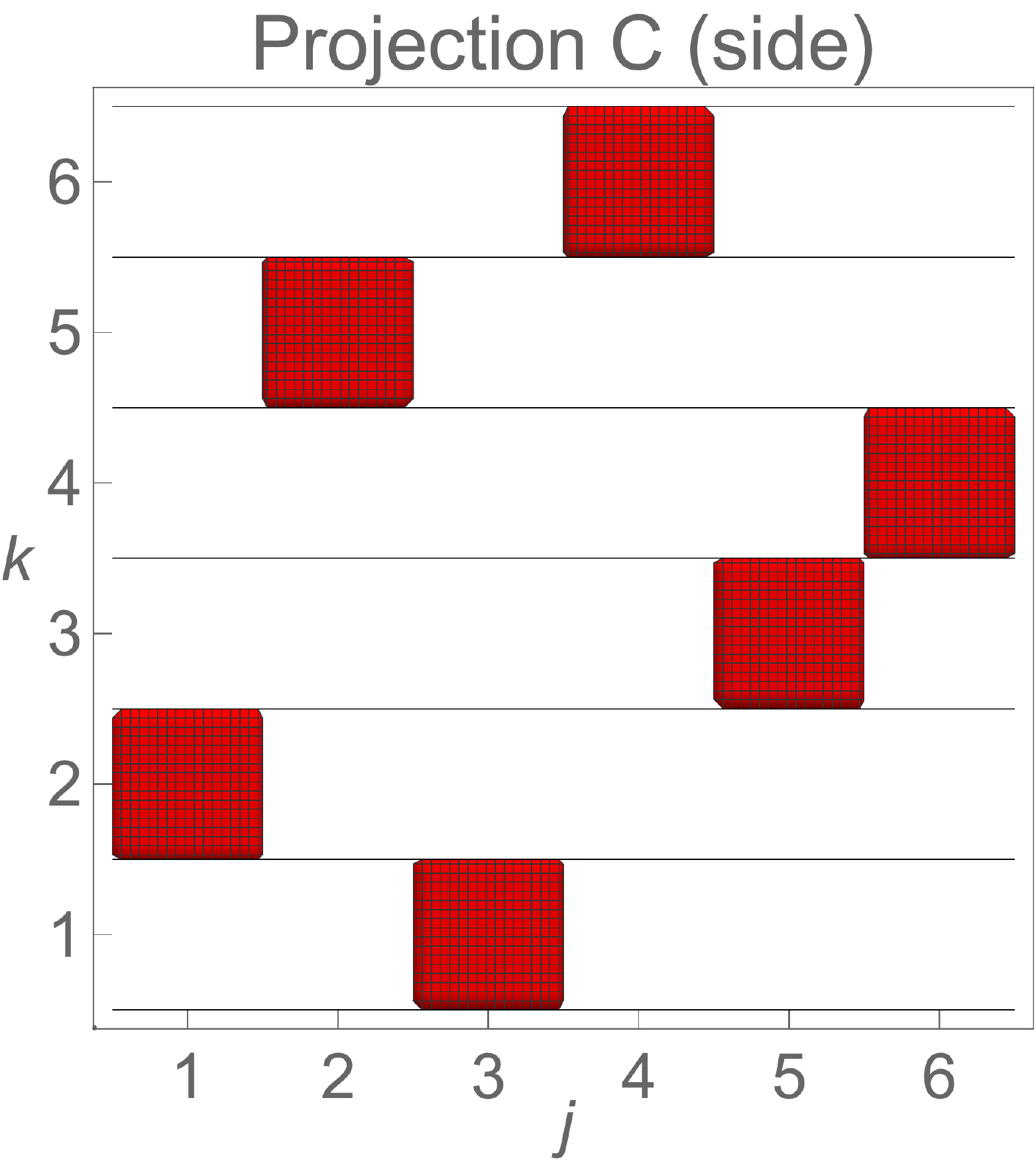}
\caption{Costas cube and its three projections}
\label{fig:cube}
\end{center}
\end{figure}

Before considering Costas cubes in more detail, we explain how Definition~\ref{defn:Costascube} can be formulated in terms of a three-dimensional generalization of permutation arrays. Define an order $n$ \emph{permutation cube} to be an $n \times n\times n$ array over $\Z_2$ for which Projections $A$, $B$, $C$ are each order~$n$ permutation arrays. Then a Costas cube is a permutation cube for which the permutation arrays given by Projections $A$, $B$, $C$ have the additional property that they are Costas arrays. Moreover, we can regard an order $n$ permutation array $(s_{i,j})$ as an $n \times n$ array over $\Z_2$ for which $\sum_{i} s_{i,j} = 1$ for each $j$, and $\sum_{j} s_{i,j} = 1$ for each $i$. We then see that an order $n$ permutation cube $(d_{i,j,k})$ can be equivalently defined as an $n\times n \times n$ array over $\Z_2$ for which each two-dimensional subarray contains exactly one $1$ entry: $\sum_{i,j} d_{i,j,k}=1$ for each $k$, and 
$\sum_{i,k} d_{i,j,k}=1$ for each $j$, and
$\sum_{j,k} d_{i,j,k}=1$ for each~$i$.
Eriksson and Linusson \cite{ErikssonLinusson00} refer to this equivalent definition of a permutation cube as a \emph{sparse} 3-dimensional permutation array, and note that sparse higher-dimensional arrays were used by Pascal in 1900 \cite{Pascal1900} to define higher-dimensional determinants. 

Each Costas cube $D$ belongs to an equivalence class $E(D)$ formed by its orbit under the action of the order 48 symmetry group of a cube under rotation and reflection; the subgroup of this symmetry group under rotation but not reflection has order 24 and is isomorphic to~$S_4$. By taking Projection $A$ (say) of each of the elements of $E(D)$, and discarding repeats if any, we obtain the set $S(D)$ of distinct Costas arrays occurring as projections of~$D$. 
This set $S(D)$ is the union of one or more equivalence classes of Costas arrays, and so its size is a multiple of~$4$.
We may exclude reflections of $D$ when forming $S(D)$, because a reflection of a projection of a Costas cube can be realized as a rotation of the cube. The size of $S(D)$ is therefore at most~24, although it can be smaller. For example, we find that as $D$ ranges over the order 6 Costas cubes (as determined by the method of Section~\ref{sec:determination}), the size of the set $S(D)$ takes each value in $\{4,8,12,16,20,24\}$. In particular, the set $S(D)$ for the order 6 Costas cube $D = (d_{i,j,k})$ given by 
\[
d_{i,j,k} = I\big[(i,j,k) \in \{(1,2,4), (2,4,1), (3,5,6), (4,1,2), (5,6,3), (6,3,5)\}\big]
\]
has size 4: its elements comprise a single equivalence class of Costas arrays whose corresponding permutations are
$(2,4,5,1,6,3)$, $(3,6,1,5,4,2)$, $(4,1,6,2,3,5)$, $(5,3,2,6,1,4)$.
We show in Section~\ref{sec:infinite} that this example is a member of an infinite family of Costas cubes all of whose elements $D$ satisfy $|S(D)|=4$.

We have three principal motivations for proposing Costas cubes. The first is to provide new perspectives on the observed existence pattern for Costas arrays. The second is to ask whether the favourable projection and autocorrelation properties of Costas cubes render them suitable for use in digital communications applications such as optical orthogonal codes and digital watermarking (as has been proposed \cite{NazarioOrtiz12} for the generalization of Costas arrays due to Drakakis). The third is to present these structures as being of mathematical interest in their own right.

\section{Determination of Costas cubes of order at most 29}
\label{sec:determination}

All Costas arrays of order at most 29 have been determined by exhaustive search: those of order at most 27 were listed in the database \cite{RickardDatabase}, and those of order 28 and 29 are listed in \cite{Costas_Enum_28} and \cite{Costas_Enum_29}, respectively.
Now any two of the Projections $A$, $B$, $C$ of a permutation cube determine the cube and therefore the third Projection. 
We may therefore determine all Costas cubes of order $n \le 29$ in the following way. For each ordered pair of (not necessarily distinct, not necessarily inequivalent) Costas arrays ($A$, $B$) of order $n$, let $D$ be the permutation cube whose Projections $A$ and $B$ are arrays $A$ and $B$, respectively, and retain those permutation cubes $D$ for which Projection $C$ is a Costas array. All retained permutation cubes $D$ are Costas cubes of order~$n$; select one representative of each equivalence class of retained cubes.

Table~\ref{tab:inequivcubes} displays, for each $n \le 29$:
the number of equivalence classes of Costas cubes of order~$n$;
the number of equivalence classes of Costas arrays of order $n$ which are projections of some Costas cubes of order $n$; and, for comparison, the total number of equivalence classes of Costas arrays of order $n$.
We see that Costas cubes exist for all orders $n \le 29$ except $18$ and $19$, and that a significant proportion of the Costas arrays of certain orders occur as projections of Costas cubes. 

\begin{table}[h]
\centering
\begin{tabular}{|c|c|c|c|}
\hline
Order 		&\# equivalence	classes & \# equivalence classes& Total \# 		\\ 
\hspace{8em}	& of Costas cubes 	& of Costas arrays	& equivalence classes 	\\
		& 			& which are projections	& of Costas arrays	\\
		&			& of some Costas cube	& 			\\ \hline
2		& 1			& 1			& 1 			\\
3		& 1			& 1			& 1 			\\
4		& 2			& 1			& 2			\\
5		& 13			& 6			& 6			\\
6		& 47			& 17			& 17			\\
7		& 30			& 26			& 30			\\
8		& 42			& 44			& 60			\\
9		& 46			& 61			& 100			\\
10		& 69			& 133			& 277			\\
11		& 66			& 126			& 555			\\
12		& 34			& 74			& 990			\\
13		& 11			& 22			& 1616			\\
14		& 6			& 6			& 2168			\\
15		& 33			& 19			& 2467			\\
16		& 6			& 6			& 2648			\\
17		& 19			& 12			& 2294			\\
18		& 0			& 0			& 1892			\\
19		& 0			& 0			& 1283			\\
20		& 2			& 3			& 810			\\
21		& 50			& 20			& 446			\\
22		& 4			& 9			& 259			\\
23		& 11			& 7			& 114			\\
24		& 2			& 1			& 25			\\
25		& 20			& 7			& 12			\\
26		& 1			& 2			& 8			\\
27		& 77			& 27			& 29			\\
28		& 3			& 4		 	& 89  			\\
29		& 33			& 18			& 23			\\ \hline
\end{tabular}
\caption{Inequivalent Costas cubes and their inequivalent projections to Costas arrays}
\label{tab:inequivcubes}
\end{table}

\section{Four infinite families of Costas cubes}
\label{sec:infinite}

In this section we give algebraic constructions for four infinite families of Costas cubes. 

Theorems~\ref{thm:gilbertwelch} and~\ref{thm:golomb} describe two classical constructions producing infinite families of Costas arrays.
In these theorems (and also in Theorems~\ref{thm:welch2} and~\ref{thm:golomb3} below), the equation appearing in the argument of the indicator function is regarded over the associated field ($\F_p$ or $\F_q$). 

\begin{theorem}[Gilbert-Welch construction $W_1(p,\phi,c)$ \cite{Gilbert}, \cite{Golomb84}]
\label{thm:gilbertwelch}
Let $p > 2$ be prime, let $\phi$ be a primitive element of $\F_p$, and let $c \in \F_p$.
Then the array $(s_{i,j})$ given by
\[
s_{i,j} =I[\phi^{j+c} = i ] \quad \mbox{for $i, j \in \{1, 2, \dots, p-1\}$} 
\]
is an order $p-1$ Costas array.
\end{theorem}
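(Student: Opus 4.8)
The plan is to verify the two defining properties of a Costas array in turn: first that $(s_{i,j})$ is a permutation array, and then that the vectors joining pairs of $1$ entries are all distinct. Throughout I would work with the $1$ entries located at the positions $(\phi^{j+c}, j)$ for $j \in \{1,2,\dots,p-1\}$, reading the condition $\phi^{j+c}=i$ as an identity in $\F_p^*$.

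For the permutation property, I would observe that as $j$ ranges over $\{1,2,\dots,p-1\}$ the exponents $j+c$ range over $p-1$ consecutive integers and so cover every residue modulo $p-1$ exactly once. Since $\phi$ is primitive of multiplicative order $p-1$, the values $\phi^{j+c}$ are then exactly the $p-1$ distinct elements of $\F_p^* = \{1,2,\dots,p-1\}$, each occurring once. Hence each column $j$ contains a single $1$ (at row $\phi^{j+c}$), and the induced map $j \mapsto \phi^{j+c}$ is a bijection, so each row contains a single $1$ as well.

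For the Costas property, suppose two ordered pairs of $1$ entries determine the same displacement vector. Writing the four columns as $a,\, a+t$ and $a',\, a'+t$ with common vertical component $t \neq 0$ (the difference of $j$-indices), the equality of horizontal ($i$-index) components becomes
\[
\phi^{a+t+c} - \phi^{a+c} = \phi^{a'+t+c} - \phi^{a'+c},
\]
which factorizes as $\phi^{a+c}(\phi^{t}-1) = \phi^{a'+c}(\phi^{t}-1)$. The crux is then to cancel the factor $\phi^{t}-1$: because the two columns in each pair are distinct and lie in $\{1,\dots,p-1\}$, the vertical component satisfies $1 \le |t| \le p-2$, so $t \not\equiv 0 \pmod{p-1}$ and hence $\phi^{t} \neq 1$ by primitivity of $\phi$. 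Cancelling gives $\phi^{a+c} = \phi^{a'+c}$, whence $a \equiv a' \pmod{p-1}$; since $a,a'$ lie in $\{1,\dots,p-1\}$ this forces $a=a'$, and the two pairs coincide.

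The argument is essentially routine once it is set up; the only point requiring care---and the step I would flag as the main obstacle---is ensuring $\phi^{t} \neq 1$, which is exactly where the primitivity of $\phi$ combines with the restricted range of the column indices. I would also note that the parameter $c$ plays no essential role beyond relabelling the columns, so the same computation handles every $c \in \F_p$ uniformly.
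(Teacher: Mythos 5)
Your proof is correct, but there is nothing in the paper to compare it against: the paper states Theorem~\ref{thm:gilbertwelch} as a classical construction, citing Gilbert and Golomb, and gives no proof of it (just as for Theorems~\ref{thm:golomb}, \ref{thm:welch2}, and~\ref{thm:golomb3}). Your argument is the standard one for the Welch construction: the permutation property follows because $j\mapsto\phi^{j+c}$ ranges over a complete set of residues modulo $p-1$ in the exponent, and the Costas property follows from the factorization $\phi^{a+c}(\phi^{t}-1)=\phi^{a'+c}(\phi^{t}-1)$ together with the observation that $1\le|t|\le p-2$ forces $\phi^{t}\neq 1$, allowing cancellation. Both steps are sound, including the implicit passage from equality of integer displacement vectors to equality modulo $p$, so the proof stands on its own.
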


\begin{theorem}[Golomb construction $G_2(q,\phi,\rho)$ \cite{Golomb84}]
\label{thm:golomb}
Let $q > 3$ be a prime power, and let $\phi$ and $\rho$ be (not necessarily distinct) primitive elements of $\F_q$. Then the array $(s_{i,j})$ given by
\[
s_{i,j} = I[ \phi^i  + \rho^j = 1 ] \quad \mbox{for $i, j \in \{1, 2, \dots, q-2\}$} 
\]
is an order $q-2$ Costas array.
\end{theorem}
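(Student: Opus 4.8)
The plan is to verify the two defining properties of a Costas array in turn: that $(s_{i,j})$ is a permutation array, and that the vectors joining its pairs of $1$ entries are all distinct. Throughout I would substitute $x = \phi^i$ and $y = \rho^j$, exploiting that $\phi$ and $\rho$ are primitive, so that $i \mapsto \phi^i$ and $j \mapsto \rho^j$ are bijections from $\{1,\dots,q-2\}$ onto $\F_q \setminus \{0,1\}$: a power equals $1$ only when its exponent is a multiple of $q-1$, which is excluded by the index range, and every power is nonzero, so the $q-2$ distinct values hit exactly the $q-2$ elements of $\F_q \setminus \{0,1\}$.

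First I would establish the permutation property. Fix a column index $j$ and set $y = \rho^j \in \F_q \setminus \{0,1\}$. A $1$ entry in column $j$ occurs exactly when $\phi^i = 1 - y$. Since $y \neq 0$ and $y \neq 1$, the element $1 - y$ also lies in $\F_q \setminus \{0,1\}$, so there is a unique $i \in \{1,\dots,q-2\}$ with $\phi^i = 1-y$. Hence each column contains exactly one $1$; the same argument with the roles of $i$ and $j$ interchanged shows each row contains exactly one $1$, so $(s_{i,j})$ is a permutation array.

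Next I would prove the Costas property by showing that every nonzero displacement $(u,v)$ is realized by at most one pair of $1$ entries. If $1$ entries sit at $(i,j)$ and $(i+u,\,j+v)$, then writing $x = \phi^i$ the defining equations $x + y = 1$ and $x\phi^u + y\rho^v = 1$ combine, after eliminating $y = 1-x$, to give $x(\phi^u - \rho^v) = 1 - \rho^v$. When $\phi^u \neq \rho^v$ this determines $x$ uniquely, hence $i$, hence $j$ via $y = 1 - x$, and therefore the whole pair; so such a displacement occurs at most once.

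The remaining, and most delicate, case is $\phi^u = \rho^v$. Here the combined equation forces $1 - \rho^v = 0$, that is $\rho^v = 1$ and therefore $\phi^u = 1$, which by primitivity requires $u \equiv v \equiv 0 \pmod{q-1}$. The key point is that the index set $\{1,\dots,q-2\}$ has size $q-2$, so any displacement satisfies $|u|, |v| \le q-3 < q-1$, and the only multiple of $q-1$ in this range is $0$. Thus $\phi^u = \rho^v$ can hold for a genuine displacement only when $(u,v) = (0,0)$, which is excluded. This is where I expect the main work to lie: ensuring that the index range is small enough to rule out the ``wraparound'' coincidences in the cyclic group $\F_q^{\times}$ that would otherwise spoil the Costas property. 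Combining the two cases shows every nonzero displacement occurs at most once, which completes the proof.
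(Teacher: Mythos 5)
Your proof is correct, but there is nothing in the paper to compare it against: the paper states this theorem as a known classical result, attributing it to Golomb \cite{Golomb84}, and gives no proof of it (the paper only proves its new results, Theorems~\ref{thm:gggcube}, \ref{thm:gw2w2cube}, and~\ref{thm:g3g3g3cube}, which \emph{use} this theorem). On its own merits your argument is complete and is essentially the standard one from the literature. The permutation property follows, as you say, from the fact that $i \mapsto \phi^i$ and $j \mapsto \rho^j$ biject $\{1,\dots,q-2\}$ onto $\F_q \setminus \{0,1\}$, together with the observation that $y \in \F_q \setminus \{0,1\}$ implies $1-y \in \F_q \setminus \{0,1\}$. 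For the Costas property, your elimination is the key step: from $x+y=1$ and $x\phi^u + y\rho^v = 1$ one gets $x(\phi^u - \rho^v) = 1 - \rho^v$, which determines $x$, and hence the whole pair of cells, uniquely when $\phi^u \neq \rho^v$; and in the degenerate case $\phi^u = \rho^v$ solvability forces $\rho^v = \phi^u = 1$, so primitivity gives $(q-1) \mid u$ and $(q-1) \mid v$, which the index range rules out for $(u,v)\neq(0,0)$ since displacements satisfy $|u|,|v| \le q-3$. You correctly identify this wraparound exclusion as the place where the restriction of the exponents to $\{1,\dots,q-2\}$ is genuinely needed; all three ingredients (bijection, elimination, exclusion of multiples of $q-1$) are handled without gaps.
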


Several variants of the constructions of Theorems~\ref{thm:gilbertwelch} and~\ref{thm:golomb} have been found. Of interest in the present context are the variant family of Gilbert-Welch Costas arrays of Theorem~\ref{thm:welch2}, as described in \cite[Theorem~7.30]{Drak06Review}, and the variant family of Golomb Costas arrays of Theorem~\ref{thm:golomb3}.

\begin{theorem}[Gilbert-Welch construction $W_2(p,\phi)$]
\label{thm:welch2}
Let $p >3$ be prime and let $\phi$ be a primitive element of $\F_p$. 
Then the array $(s_{i,j})$ given by 
\[
s_{i,j} = I[ i =\phi^{j} -1] \quad \mbox{for $i, j \in \{1, 2, \dots, p-2\}$} 
\]
is an order $p-2$ Costas array.
\end{theorem}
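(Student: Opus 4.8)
The plan is to verify the two defining properties of an order $p-2$ Costas array directly from the formula $s_{i,j} = I[i = \phi^j - 1]$, using only that $\phi$ generates the cyclic group $\F_p^*$ of order $p-1$. Throughout I would write $\sigma(j)$ for the unique residue in $\{1, \dots, p-2\}$ congruent to $\phi^j - 1 \pmod p$, so that the $1$ entries of the array sit at the positions $(\sigma(j), j)$; the goal is to show that $\sigma$ is a permutation of $\{1, \dots, p-2\}$ and that the associated array satisfies the distinct-difference (Costas) condition.

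First I would check the permutation-array property. The powers $\phi^1, \phi^2, \dots, \phi^{p-2}$ are exactly the $p-2$ elements of $\F_p^*$ other than $\phi^0 = 1$, so as $j$ ranges over $\{1, \dots, p-2\}$ the quantity $\phi^j - 1$ ranges bijectively over the shift of $\F_p^* \setminus \{1\}$ by $-1$, which is $\{1, 2, \dots, p-2\}$. In particular $\phi^j - 1$ is never $0$ (which would force $\phi^j = 1$) and never $-1 \equiv p-1$ (which would force $\phi^j = 0$), so each value $i = \sigma(j)$ genuinely lands in the required range $\{1, \dots, p-2\}$; injectivity of $j \mapsto \phi^j$ over a span shorter than one full period then shows $\sigma$ is a permutation, giving exactly one $1$ in each row and column.

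Next I would establish the Costas condition. A pair of $1$ entries in columns $j$ and $j+\delta$ contributes the displacement vector $(\sigma(j+\delta) - \sigma(j), \delta)$, so it suffices to show that for each fixed nonzero $\delta$ the integer difference $\sigma(j+\delta) - \sigma(j)$ is distinct for distinct~$j$. Suppose $\sigma(j_1 + \delta) - \sigma(j_1) = \sigma(j_2 + \delta) - \sigma(j_2)$ as integers. Reducing modulo $p$ and substituting $\sigma(j) \equiv \phi^j - 1$ collapses each side via $\phi^{j+\delta} - \phi^j = \phi^j(\phi^\delta - 1)$, yielding the field identity $\phi^{j_1}(\phi^\delta - 1) = \phi^{j_2}(\phi^\delta - 1)$ in $\F_p$. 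Because both columns lie in $\{1, \dots, p-2\}$ we have $0 < |\delta| \le p-3 < p-1$, so $\phi^\delta \neq 1$ and the factor $\phi^\delta - 1$ is a nonzero element of the field; cancelling it gives $\phi^{j_1} = \phi^{j_2}$, hence $j_1 \equiv j_2 \pmod{p-1}$ and therefore $j_1 = j_2$, as required.

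I do not expect a genuinely hard step here; the two places demanding care are (i) checking that $\phi^j - 1$ falls \emph{exactly} into $\{1, \dots, p-2\}$, where excluding the value $j = p-1$ and the hypothesis $p > 3$ (which keeps the order $p-2$ at least $2$, so that displacements exist at all) are what make the bookkeeping clean, and (ii) the passage from an equality of \emph{integer} displacements to an equality in $\F_p$ — this is the legitimate direction of reduction, and it is precisely what licenses the multiplicative cancellation that does the real work. One could alternatively deduce the statement by exhibiting $W_2(p,\phi)$ as a relabelling or row/column deletion of the Gilbert-Welch array of Theorem~\ref{thm:gilbertwelch}, but the direct verification above seems more transparent.
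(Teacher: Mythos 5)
Your proof is correct, but note that the paper does not actually prove Theorem~\ref{thm:welch2}: it quotes the result from the literature (Drakakis's review, Theorem~7.30), where the standard argument is precisely the alternative you sketch in your final paragraph. Namely, the $W_1(p,\phi,0)$ array $(t_{i,j})=(I[\phi^j=i])$ of Theorem~\ref{thm:gilbertwelch} has a $1$ at position $(1,p-1)$ because $\phi^{p-1}=1$; deleting the row and column through that $1$ and reindexing the rows gives $s_{i,j}=t_{i+1,j}=I[i=\phi^j-1]$ for $i,j\in\{1,\dots,p-2\}$, and removing a row and column that share a $1$ from a Costas array always leaves a Costas array, since the surviving difference vectors form a subset of the original distinct ones. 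That route buys brevity and makes the kinship with $W_1$ explicit --- it is the same device the paper itself uses to relate the $G_3$ construction to $G_2$. Your direct verification buys self-containedness instead: it never invokes Theorem~\ref{thm:gilbertwelch}, and it isolates exactly where each hypothesis enters --- primitivity of $\phi$ both for the bijection $j\mapsto\phi^j-1$ from $\{1,\dots,p-2\}$ onto itself and for the nonvanishing of $\phi^\delta-1$, the latter legitimate because $0<|\delta|\le p-3<p-1$. Your handling of the one delicate point is also right: you pass from equality of \emph{integer} displacements to a congruence modulo $p$ (the valid direction of implication) and only then cancel the nonzero field element $\phi^\delta-1$ to conclude $\phi^{j_1}=\phi^{j_2}$ and hence $j_1=j_2$.
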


\begin{theorem}[Golomb construction $G_3(q,\phi)$ \cite{Golomb84}]
\label{thm:golomb3}
Let $q > 3$ be a prime power, and let $\phi$ be a primitive element of $\F_q$ for which $1-\phi$ is also primitive. Then the array $(s_{i,j})$ given by
\[
s_{i,j} = I[ \phi^{i+1}  + (1-\phi)^{j+1} = 1 ] \quad \mbox{for $i, j \in \{1, 2, \dots, q-3\}$} 
\]
is an order $q-3$ Costas array.
\end{theorem}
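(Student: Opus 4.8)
The plan is to recognize $G_3(q,\phi)$ as the array obtained by deleting the first row and the first column from the Golomb array $G_2(q,\phi,1-\phi)$ of Theorem~\ref{thm:golomb}, and then to argue that such a deletion preserves the Costas property. The point of the shifted exponents $\phi^{i+1}$, $(1-\phi)^{j+1}$ and the reduced index range $\{1,\dots,q-3\}$ is precisely to strip off one row and one column of a known $G_2$ array.

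First I would invoke Theorem~\ref{thm:golomb} with the second primitive element taken to be $\rho = 1-\phi$. The hypotheses match exactly: $q>3$ is a prime power, $\phi$ is primitive, and $1-\phi$ is primitive by assumption. Hence $(t_{i,j}) = (I[\phi^i + (1-\phi)^j = 1])$, with $i,j \in \{1,2,\dots,q-2\}$, is an order $q-2$ Costas array, and in particular a permutation array.

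Next I would pin down a $1$ entry at the corner. Substituting $i=j=1$ gives $\phi + (1-\phi) = 1$, so $t_{1,1}=1$; this is exactly where the special choice $\rho=1-\phi$ is used. Since $(t_{i,j})$ is a permutation array, the entry at $(1,1)$ is the unique $1$ in row $1$ and the unique $1$ in column $1$. Deleting row $1$ and column $1$ therefore removes exactly this one $1$ entry and leaves a genuine permutation array supported on $i,j \in \{2,\dots,q-2\}$. Re-indexing by $(i,j)\mapsto(i+1,j+1)$ identifies this array with $(s_{i,j}) = (I[\phi^{i+1}+(1-\phi)^{j+1}=1])$ for $i,j \in \{1,\dots,q-3\}$, which is precisely the array in the statement.

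Finally I would confirm that the deletion preserves the Costas property. The vectors joining pairs of $1$ entries of $(s_{i,j})$ are, up to the constant coordinate shift coming from the re-indexing, a subset of the vectors joining pairs of $1$ entries of the Costas array $(t_{i,j})$; distinctness of the latter forces distinctness of the former, so $(s_{i,j})$ is an order $q-3$ Costas array. I expect no serious obstacle here: once the reduction to $G_2$ is seen, the argument is routine. The only point requiring care is verifying that the corner $(1,1)$ is occupied — this is where the defining identity $\phi+(1-\phi)=1$ enters — since that is exactly what guarantees the deletion of one row and one column excises a single $1$ entry and hence returns a permutation array one size smaller.
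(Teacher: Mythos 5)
Your proposal is correct and is essentially the paper's own justification: immediately after stating Theorem~\ref{thm:golomb3}, the paper observes that the $G_2(q,\phi,1-\phi)$ Costas array $(t_{i,j})$ satisfies $t_{1,1}=1$ and that the $G_3(q,\phi)$ array arises by removing the row and column of $(t_{i,j})$ through position $(1,1)$, giving $(s_{i,j})=(t_{i+1,j+1})$. Your additional details (the corner $1$ is the unique $1$ in its row and column, and difference vectors of a re-indexed subarray form a subset of those of the original) correctly fill in why this deletion preserves both the permutation and Costas properties.
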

For each prime power~$q$, there is a primitive element $\phi$ for which $1-\phi$ is also primitive (as required in Theorem~\ref{thm:golomb3}) \cite{CohenMullen91},~\cite{MorenoSotero90}. For each such $\phi$, the $G_2(q,\phi,1-\phi)$ Costas array $(t_{i,j})$ satisfies $t_{1,1}=1$ and the $G_3(q,\phi)$ Costas array $(s_{i,j})$ of Theorem~\ref{thm:golomb3} can be viewed as arising from the removal of the row and column of~$(t_{i,j})$ containing the position $(1,1)$ to form $(s_{i,j})=(t_{i+1,j+1})$.

We now give our first algebraic construction of an infinite family of Costas cubes. Each of Projections $A$, $B$, $C$ in this construction is a $G_2$ Golomb Costas array. For $q$ a prime power, we have
\begin{equation}
(1-y)^{-1}+(1-y^{-1})^{-1}=1 \quad \mbox{for $y \in \F_q \setminus \{0,1\}$}
\label{eq:identity}
\end{equation}
and 
\begin{equation}
\{ \phi^i : 1 \le i \le q-2 \} =  \F_q \setminus \{0,1\} \quad 
		\mbox{for $\phi \in \F_q$ primitive}.
\label{eq:star}
\end{equation}

\begin{theorem}
\label{thm:gggcube}
Let $q > 3$ be a prime power, and let $\phi$, $\rho$, $\psi$ be (not necessarily distinct) primitive elements of $\F_q$. Then the array $D = (d_{i,j,k})$ over $\Z_2$ given by 
\begin{equation}
  d_{i,j,k} = I[
  \phi^i + \rho^{-j} = 1
  \quad\mbox{and}\quad 
  \phi^{-i} + \psi^{k} = 1
  \quad\mbox{and}\quad 
  \rho^j + \psi^{-k} = 1
]
\quad \mbox{for $i$, $j$, $k \in \{1, 2, \dots, q-2\}$}
\label{eq:golomb a}
\end{equation}
is an order $q-2$ Costas cube for which 
Projection $A$ is a $G_2(q, \phi, \rho^{-1})$ Golomb Costas array, 
Projection $B$ is a $ G_2(q, \phi^{-1}, \psi)$ Golomb Costas array, and 
Projection $C$ is a $ G_2(q,\rho, \psi^{-1})$ Golomb Costas array.
\end{theorem}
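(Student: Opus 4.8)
The plan is to evaluate the three projection sums of $D$ and to show that each equals the Golomb Costas array claimed; Theorem~\ref{thm:golomb} then immediately certifies that Projections $A$, $B$, $C$ are Costas arrays, so that $D$ is a Costas cube. The defining conjunction has been built so that its three clauses are exactly the defining conditions of the three target $G_2$ arrays: the clause $\phi^i+\rho^{-j}=1$ is the $(i,j)$-condition of the $G_2(q,\phi,\rho^{-1})$ array, the clause $\phi^{-i}+\psi^k=1$ is the $(i,k)$-condition of the $G_2(q,\phi^{-1},\psi)$ array, and the clause $\rho^j+\psi^{-k}=1$ is the $(j,k)$-condition of the $G_2(q,\rho,\psi^{-1})$ array. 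Thus the whole theorem reduces to showing that each projection sum collapses to the corresponding single clause.

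The engine of the argument is the observation that any two of the three clauses force the third, via the identity~\eqref{eq:identity}. For instance, with $x=\phi^i$ the first two clauses read $\rho^{-j}=1-x$ and $\psi^k=1-x^{-1}$, so $\rho^j=(1-x)^{-1}$ and $\psi^{-k}=(1-x^{-1})^{-1}$, whence the third clause $\rho^j+\psi^{-k}=1$ is precisely~\eqref{eq:identity} with $y=x$. The analogous closures in the other cyclic orderings use~\eqref{eq:identity} with $y=\phi^i$ or $y=\rho^j$.

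For each projection I would fix the two indices it retains and sum over the third, exploiting the structural fact that the retained defining clause never involves the summed index while the other two clauses both do. Consider Projection $A$: fix $i,j$ and sum over $k$. If $\phi^i+\rho^{-j}\ne 1$ then $d_{i,j,k}=0$ for every $k$, so $a_{i,j}=0$. If $\phi^i+\rho^{-j}=1$, then the clause $\phi^{-i}+\psi^k=1$ forces $\psi^k=1-\phi^{-i}$, and since $1-\phi^{-i}\in\F_q\setminus\{0,1\}$ this pins down a unique $k\in\{1,\dots,q-2\}$ by~\eqref{eq:star}; for that single $k$ the remaining clause holds automatically by the paragraph above, so $a_{i,j}=1$. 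Hence $a_{i,j}=I[\phi^i+\rho^{-j}=1]$, the $G_2(q,\phi,\rho^{-1})$ array. Projections $B$ and $C$ follow the identical template: for $B$ fix $i,k$ and let $\phi^i+\rho^{-j}=1$ determine $\rho^j$ uniquely; for $C$ fix $j,k$ and let $\phi^i+\rho^{-j}=1$ determine $\phi^i$ uniquely; in each case the one remaining clause is then free by~\eqref{eq:identity}.

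The step I expect to require the most care---and the only genuine obstacle---is the bookkeeping of domain restrictions. At each stage one must check that the field element being solved for lies in $\F_q\setminus\{0,1\}$, so that~\eqref{eq:star} yields a unique exponent in $\{1,\dots,q-2\}$ and~\eqref{eq:identity} is legitimately applicable. These checks are routine, since $1-\phi^{\pm i}$, $1-\rho^{\pm j}$ and $1-\psi^{\pm k}$ are never $0$ or $1$ (no power $\phi^{\pm i}$, $\rho^{\pm j}$, $\psi^{\pm k}$ with exponent in the admissible range equals $0$ or $1$), but they are exactly what forces each projection sum to collapse to a clean $0/1$ indicator rather than merely being bounded above.
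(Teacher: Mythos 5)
Your proposal is correct and follows essentially the same route as the paper: use identity~\eqref{eq:identity} to show any two clauses of~\eqref{eq:golomb a} imply the third, then for each projection invoke~\eqref{eq:star} to see that exactly one value of the summed index satisfies the clause involving it, so each projection sum collapses to the single retained clause and Theorem~\ref{thm:golomb} applies. Your case-split phrasing (retained clause fails versus holds) is just a cosmetic variant of the paper's rewriting of the indicator to two clauses before summing.
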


\begin{proof}
By Definition~\ref{defn:Costascube}, it is sufficient to show that each of Projections $A$, $B$ and $C$ is the specified Costas array.

Consider Projection $A = (a_{i,j}) =  (\sum_{k} d_{i,j,k})$.
By \eqref{eq:identity}, any two of the three conditions in the indicator function of \eqref{eq:golomb a} (namely $\phi^i + \rho^{-j} = 1$ and $\phi^{-i} + \psi^{k} = 1$ and $\rho^j + \psi^{-k} = 1$) imply the third, so we may rewrite \eqref{eq:golomb a} as
\[
  d_{i,j,k} = I[
  \phi^i + \rho^{-j} = 1
  \quad\mbox{and}\quad 
  \phi^{-i} + \psi^{k} = 1
].
\]
For $i$, $j \in \{1, 2, \dots, q-2\}$, sum over $k$ to give
\[
a_{i,j} = \sum_{k=1}^{q-2} 
I[\phi^i + \rho^{-j} = 1 \quad\mbox{and}\quad \phi^{-i} + \psi^{k} = 1].
\]
Now $1-\phi^{-i} \in \F_q \setminus \{0,1\}$, so by \eqref{eq:star} there is exactly one value $k \in \{1, 2, \dots, q-2\}$ for which $\phi^{-i}+\psi^k =1$. Therefore
\[
a_{i,j} = I[ \phi^i + \rho^{-j} =1],
\]
and so Projection $A$ is a $G_2(q, \phi, \rho^{-1})$ Golomb Costas array by Theorem~\ref{thm:golomb}.

Similar arguments apply to Projections
$B = (b_{i,k}) = (\sum_j d_{i,j,k})$ and 
$C = (c_{j,k}) = (\sum_i d_{i,j,k})$.
\end{proof}

\begin{example}
We construct a Costas cube $D = (d_{i,j,k})$ of order $14$ according to Theorem~\ref{thm:gggcube}, using $q=16$. Represent $\F_{2^4}$ as $ \Z_2[x]/\langle1 + x^3 + x^4\rangle$, and take $\phi = x$ and $\rho = 1 + x^2 + x^3$ and $\psi = x + x^2 + x^3$. Then from \eqref{eq:golomb a}, the triples $(i,j,k)$ for which $d_{i,j,k} = 1$ are given by
\[
\begin{array}{*{16}{c}}
$i$ & &1&2&3&4&5&6&7&8&9&10&11&12&13&14\\
$j$ & &3&6&1&12&10&2&7&9&8&5&11&4&13&14\\
$k$ & &7&14&2&13&10&4&12&11&1&5&6&8&3&9
\end{array}
\]
and the permutations corresponding to Projections $A$, $B$, $C$ of $D$ are given by
\[
\begin{array}{*{16}{c}}
$A$ & &3&6&1&12&10&2&7&9&8&5&11&4&13&14\\
$B$ & &9&3&13&6&10&11&1&12&14&5&8&7&4&2\\
$C$ & &8&1&13&2&5&11&3&4&14&10&9&7&12&6
\end{array}
\]
\end{example}

Note that in the special case $\phi = \rho = \psi$, the three projections of the Costas cube~$D$ specified in Theorem~\ref{thm:gggcube} are $G_2(q,\phi,\phi^{-1})$, $G_2(q, \phi^{-1},\phi)$ and $G_2(q,\phi,\phi^{-1})$. From Theorem~\ref{thm:golomb}, these projections all belong to a single equivalence class of Costas arrays having symmetry about a diagonal, and so the set $S(D)$ of distinct Costas arrays occurring as projections of $D$ has size~$4$. The order 6 Costas cube with this property that was given at the end of Section~\ref{sec:introduction} is constructed by representing $\F_{2^3}$ as $\Z_2[x]/\langle1 + x^2 + x^3\rangle$ and taking $\phi = \rho = \psi = 1+x+x^2$.

We now give our second algebraic construction of an infinite family of Costas cubes. Projections $A$ and $B$ in this construction are (equivalent to) $W_2$ Gilbert-Welch Costas arrays, and Projection $C$ is a $G_2$ Golomb Costas array.

\begin{theorem}
\label{thm:gw2w2cube}
Let $p>3$ be prime, and let $\phi$, $\psi$ be (not necessarily distinct) primitive elements of $\F_p$. 
Then the array $D = (d_{i,j,k})$ over $\Z_2$ given by 
\begin{equation}
  d_{i,j,k} = I[i = \phi^j-1 = -\psi^k ] \quad \mbox{for $i$, $j$, $k \in \{1, 2, \dots, p-2\}$}
  \label{eq:golomb-welch}
\end{equation}
is an order $p-2$ Costas cube for which Projection $A$ is a $W_2(p, \phi)$ Gilbert-Welch Costas array, Projection $B$ is the reflection through a vertical axis of a $W_2(p, \psi)$ Gilbert-Welch Costas array, and Projection $C$ is a $G_2(p, \phi,  \psi)$ Golomb Costas array.
\end{theorem}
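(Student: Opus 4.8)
The plan is to mirror the structure of the proof of Theorem~\ref{thm:gggcube}, exploiting the fact that the defining condition in \eqref{eq:golomb-welch} is a conjunction of two equalities, $i = \phi^j - 1$ and $i = -\psi^k$. As in the earlier proof, it suffices to verify that each of the three projections is the specified Costas array, and for each projection we will find that summing over the missing index collapses because exactly one value of that index satisfies the relevant equation.

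First I would treat Projection~$A = (a_{i,j}) = (\sum_k d_{i,j,k})$. Fixing $i, j \in \{1,\dots,p-2\}$, the summand is $I[i = \phi^j - 1 \text{ and } i = -\psi^k]$. For the second equation, observe that as $i$ ranges over $\{1,\dots,p-2\}$ the value $-i$ ranges over $\F_p \setminus \{0,1\}$ (after reduction), and by \eqref{eq:star} there is exactly one $k \in \{1,\dots,p-2\}$ with $\psi^k = -i$; I would need to check carefully that $-i \neq 0,1$ for the relevant range, which amounts to noting $i \neq 0$ and $i \neq p-1$, both of which hold. Hence the sum over $k$ contributes the factor $I[i = \phi^j - 1]$, so $a_{i,j} = I[i = \phi^j - 1]$, which is exactly the $W_2(p,\phi)$ array of Theorem~\ref{thm:welch2}.

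Next I would handle Projection~$C = (c_{j,k}) = (\sum_i d_{i,j,k})$, which I expect to be the cleanest: summing over $i$ forces $i = \phi^j - 1$ and $i = -\psi^k$ to coincide, and since $i$ is determined, the sum equals $I[\phi^j - 1 = -\psi^k] = I[\phi^j + \psi^k = 1]$, matching the $G_2(p,\phi,\psi)$ Golomb array of Theorem~\ref{thm:golomb}. The remaining and most delicate case is Projection~$B = (b_{i,k}) = (\sum_j d_{i,j,k})$. Here summing over $j$ and using \eqref{eq:star} gives $b_{i,k} = I[i = -\psi^k]$, which by itself describes a $W_2(p,\psi)$-type array but indexed through $-\psi^k$ rather than $\psi^k - 1$; the claim is that this is the reflection through a vertical axis of the genuine $W_2(p,\psi)$ array. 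The hard part will be pinning down this reflection precisely: I would argue that replacing $k$ by $p-1-k$ (or an equivalent involution on the column index) converts the relation $i = -\psi^k$ into $i = \psi^{k'} - 1$, using that $\psi$ is primitive so that $\psi^{p-1} = 1$ and hence $-\psi^k = \psi^{k + (p-1)/2}$, and then verifying that the induced index map is exactly the horizontal flip $k \mapsto p-1-k$ on $\{1,\dots,p-2\}$. Getting the bookkeeping of this reflection exactly right, rather than merely up to equivalence, is the only genuine obstacle; the collapse-by-summation arguments for $A$, $B$, $C$ are otherwise routine applications of \eqref{eq:star}.
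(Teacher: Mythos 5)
Your treatment of Projections $A$ and $C$ is correct and matches the paper's argument: in both cases the sum collapses because, by \eqref{eq:star}, the quantity $-\psi^k$ ranges over $\F_p \setminus \{0,-1\}$ (equivalently, there is exactly one index value satisfying the relevant equation), and the index-range checks you flag ($i \neq 0$, $i \neq p-1$) are exactly the right ones. Your derivation of $b_{i,k} = I[i = -\psi^k]$ for Projection $B$ is also correct.

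However, your plan for identifying $(b_{i,k})$ with the reflected $W_2(p,\psi)$ array has a genuine gap, and the route you sketch would fail. A reflection through a \emph{vertical} axis flips the horizontal index, which in the paper's convention is the first index $i$, not the second index $k$: the reflection of $(s_{i,k})$ is $(r_{i,k}) = (s_{p-1-i,k})$, and the verification is the one-line computation
\[
r_{i,k} = s_{p-1-i,k} = I[p-1-i = \psi^k - 1] = I[i = -\psi^k] = b_{i,k}.
\]
Your proposed manipulation of the column index cannot succeed: while it is true that $-\psi^k = \psi^{k+(p-1)/2}$, this converts the relation $i = -\psi^k$ into $i = \psi^{k'}$ (a $W_1$-type ``pure exponential'' relation), not into $i = \psi^{k'}-1$. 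More generally, any permutation $\sigma$ of the columns of $s_{i,k} = I[i = \psi^k - 1]$ yields an array whose $1$ entries satisfy $i = \psi^{\sigma(k)}-1$, i.e.\ a relation of the form ``$i+1$ is a power of $\psi$,'' whereas $b_{i,k}$ demands ``$-i$ is a power of $\psi$''; matching the two would force $\sigma(k) = \log_\psi(1-\psi^k)$, which is a discrete-logarithm permutation, not the flip $k \mapsto p-1-k$ nor any shift. So the ``bookkeeping'' you defer is not bookkeeping at all: the reflection simply acts on the other index, and once that is recognized the identification is immediate.
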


\begin{proof}
Consider Projection $A = (a_{i,j}) = (\sum_k d_{i,j,k})$.
For $i, j \in \{1, 2,  \dots, p-2\}$, sum \eqref{eq:golomb-welch} over $k$ to obtain
\[
 a_{i,j}= \sum_{k=1}^{p-2}I[ i = \phi^j-1\quad \text{and} \quad i = -\psi^k ].
\]
By \eqref{eq:star} we have $-\psi^k \in \F_p \setminus \{0,-1\}$, and so there is exactly one value $k \in \{1, 2,  \dots, p-2\}$ for which $i =  -\psi^k$. Therefore
\[
 a_{i,j} =  I[i = \phi^j -1],
\]
and so Projection $A$ is a $W_2(p, \phi)$ Gilbert-Welch Costas array by Theorem~\ref{thm:welch2}.

For Projection $C = (c_{j,k}) = (\sum_i d_{i,j,k})$, rewrite \eqref{eq:golomb-welch} as
\begin{equation}
\label{eq:rewrite}
 d_{i,j,k} = I[i  = -\psi^k \quad \mbox{and} \quad \phi^j + \psi^k =1 ]
\end{equation}
and similarly sum over $i$ to obtain
\[
 c_{j,k} = I[\phi^j + \psi^k =1],
\]
so that Projection $C$ is a $G_2(p, \phi, \psi)$ Golomb Costas array.

For Projection $B= (b_{i,k}) = (\sum_j d_{i,j,k})$, similarly sum \eqref{eq:rewrite} over $j$ to obtain 
\[
b_{i,k} = I[i = -\psi^k].
\]
The reflection $(r_{i,k})$ through a vertical axis of a $W_2(p, \psi)$ Gilbert-Welch Costas array $(s_{i,k})$ is given by 
\[
r_{i,k} = s_{p-1-i, k} = I[p-1-i = \psi^k -1] = I[i = -\psi^k],
\]
and so the reflection $(r_{i,k})$ equals Projection $B = (b_{i,k})$.
\end{proof}

\begin{example}
We construct a Costas cube $D = (d_{i,j,k})$ of order $11$ according to Theorem~\ref{thm:gw2w2cube}, using $p=13$, $\phi=11$ and $\psi=6$. From \eqref{eq:golomb-welch}, the triples $(i,j,k)$ for which $d_{i,j,k} = 1$ are given by
\[
\begin{array}{*{13}{c}}
$i$ & & 1 & 2 & 3 & 4 & 5 & 6 & 7 & 8 & 9 &10 &11 \\
$j$ & & 7 & 4 & 2 & 3 &11 & 5 & 9 & 8 &10 & 1 & 6 \\
$k$ & & 6 &11 & 2 & 4 & 3 & 7 & 1 & 9 &10 & 8 & 5 
\end{array}
\]
and the permutations corresponding to Projections $A$, $B$, $C$ of $D$ are given by
\[
\begin{array}{*{13}{c}}
$A$ & &10 & 3 & 4 & 2 & 6 &11 & 1 & 8 & 7 & 9 & 5 \\
$B$ & & 7 & 3 & 5 & 4 &11 & 1 & 6 &10 & 8 & 9 & 2 \\
$C$ & & 9 & 2 &11 & 3 & 6 & 7 & 5 & 1 & 8 &10 & 4 
\end{array}
\]
\end{example}

We now give our third and fourth algebraic constructions of an infinite family of Costas cubes. Each of Projections $A$, $B$, $C$ in both of these constructions is (equivalent to) a $G_3$ Golomb Costas array.
\begin{theorem}
\label{thm:g3g3g3cube}
Let $q>3$ be a prime power, and suppose there exists a primitive element $\phi$ of $\F_q$ for which both $1-\phi$ and $1-\phi^{-1}$ are also primitive. Then 
\begin{enumerate}[(i)]
\item
the array $D = (d_{i,j,k})$ over $\Z_2$ given by 
\begin{align}
  d_{i,j,k} = & I[
  \phi^{i+1} + (1-\phi)^{j+1} = 1
  \quad\mbox{and}\quad 
  \phi^{-(i+1)} + (1-\phi^{-1})^{k+1} = 1	
  \quad\mbox{and}				\nonumber \\
  & \phantom{I[} (1-\phi)^{-(j+1)} + (1-\phi^{-1})^{-(k+1)} = 1] 		
  \hspace{15mm} \mbox{for $i$, $j$, $k \in \{1, 2, \dots, q-3\}$} \label{eq:g3a}
\end{align}
is an order $q-3$ Costas cube for which 
Projection $A$ is a $G_3(q, \phi)$ Golomb Costas array, 
Projection $B$ is a $G_3(q, \phi^{-1})$ Golomb Costas array, and 
Projection $C$ is a $G_3(q,(1-\phi)^{-1})$ Golomb Costas array.
\item
the array $E = (e_{i,j,k})$ over $\Z_2$ given by 
\begin{align}
  e_{i,j,k} = & I[
  \phi^{i+1} + (1-\phi)^{j+1} = 1
  \quad\mbox{and}\quad 
  \phi^i + (1-\phi^{-1})^{k+1} = 1
  \quad\mbox{and}				\nonumber \\
  & \phantom{I[} (1-\phi)^j + (1-\phi^{-1})^k = 1]			
  \hspace{20mm} \mbox{for $i$, $j$, $k \in \{1, 2, \dots, q-3\}$} \label{eq:g3b}
\end{align}
is an order $q-3$ Costas cube for which 
Projection $A$ is a $G_3(q, \phi)$ Golomb Costas array, 
Projection $B$ is the reflection through a vertical axis of a $G_3(q, \phi^{-1})$ Golomb Costas array, and 
Projection $C$ is the rotation through $180^\circ$ of a $G_3(q,(1-\phi)^{-1})$ Golomb Costas array.
\end{enumerate}
\end{theorem}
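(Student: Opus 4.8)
The plan is to prove both parts by the method already used for Theorem~\ref{thm:gggcube}: by Definition~\ref{defn:Costascube} it suffices to show that each of Projections $A$, $B$, $C$ equals the claimed Costas array. In every case the engine is to establish that \emph{any two of the three conditions in the indicator function force the third}; once this is known, the condition not involving the summation variable factors out, the remaining two conditions collapse to one, and the surviving single condition is recognised as a $G_3$ array (or a symmetry of one) via Theorem~\ref{thm:golomb3}. Throughout I would use \eqref{eq:star} in the shifted form that, for a primitive base $\theta$, the powers $\theta^m$ with $m \in \{2,\dots,q-2\}$ run exactly through $\F_q \setminus \{0,1,\theta\}$, since here the exponents are shifted by $1$ relative to the range $\{1,\dots,q-2\}$ used in Theorem~\ref{thm:gggcube}.

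For part~(i) the ``any two imply the third'' step is \emph{exactly} identity~\eqref{eq:identity} with $y=\phi^{i+1}$: using the first two conditions of~\eqref{eq:g3a} to rewrite $(1-\phi)^{-(j+1)} = (1-\phi^{i+1})^{-1}$ and $(1-\phi^{-1})^{-(k+1)} = (1-\phi^{-(i+1)})^{-1}$, the third condition becomes $(1-\phi^{i+1})^{-1} + (1-\phi^{-(i+1)})^{-1} = 1$, which holds by~\eqref{eq:identity}. To compute Projection~$A$ I would sum~\eqref{eq:g3a} over $k$; the factored-out first condition is independent of $k$, and by the shifted \eqref{eq:star} with primitive base $1-\phi^{-1}$ there is exactly one $k \in \{1,\dots,q-3\}$ solving the second condition, provided $1-\phi^{-(i+1)} \notin \{0,1,1-\phi^{-1}\}$; this holds because $i+1 \in \{2,\dots,q-2\}$ forces $\phi^{-(i+1)} \notin \{1,\phi^{-1}\}$. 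Hence $a_{i,j} = I[\phi^{i+1} + (1-\phi)^{j+1}=1]$, a $G_3(q,\phi)$ array by Theorem~\ref{thm:golomb3}, and the analogous summations over $j$ and $i$ identify Projections $B$ and $C$ as $G_3(q,\phi^{-1})$ and $G_3(q,(1-\phi)^{-1})$. Here I would record once that all three target arrays are well-defined: $1-\phi$ and $1-\phi^{-1}$ are primitive by hypothesis, $(1-\phi)^{-1}$ is then primitive, and $1-(1-\phi)^{-1} = (1-\phi^{-1})^{-1}$ is primitive, so Theorem~\ref{thm:golomb3} applies in each case.

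For part~(ii) the same skeleton applies to~\eqref{eq:g3b}, but the exponents are no longer symmetric, so the ``any two imply the third'' step is a short direct computation rather than a verbatim appeal to~\eqref{eq:identity}. Concretely, from the first two conditions I would write $(1-\phi)^{j} = (1-\phi^{i+1})(1-\phi)^{-1}$ and $(1-\phi^{-1})^{k} = (1-\phi^{i})(1-\phi^{-1})^{-1}$, and then verify the third condition by substituting the relation $(1-\phi^{-1})^{-1} = -\phi(1-\phi)^{-1}$: the terms $\pm\phi^{i+1}$ cancel, leaving $(1-\phi)(1-\phi)^{-1} = 1$. After each projection collapses as before, Projection~$A$ is again $G_3(q,\phi)$, while Projection~$B$ equals $I[\phi^{i} + (1-\phi^{-1})^{k+1}=1]$ and Projection~$C$ equals $I[(1-\phi)^{j} + (1-\phi^{-1})^{k}=1]$. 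I would then identify these with the claimed symmetries: under the reflection $i \mapsto q-2-i$ the $G_3(q,\phi^{-1})$ entry $I[\phi^{-(i+1)}+(1-\phi^{-1})^{k+1}=1]$ becomes $I[\phi^{i}+(1-\phi^{-1})^{k+1}=1]$ because $\phi^{-(q-1-i)}=\phi^{i}$, and under the $180^\circ$ rotation $(j,k)\mapsto(q-2-j,q-2-k)$ the $G_3(q,(1-\phi)^{-1})$ entry $I[(1-\phi)^{-(j+1)}+(1-\phi^{-1})^{-(k+1)}=1]$ becomes $I[(1-\phi)^{j}+(1-\phi^{-1})^{k}=1]$, using $(1-\phi)^{q-1}=(1-\phi^{-1})^{q-1}=1$.

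I expect the main obstacle to be the bookkeeping in part~(ii): matching each projection's surviving single condition to the correct reflected or rotated $G_3$ array, which requires getting the index substitutions $i\mapsto q-2-i$ and $(j,k)\mapsto(q-2-j,q-2-k)$ and the reductions $\phi^{q-1}=1$ precisely right, together with the fact (established in part~(i)) that $(1-\phi)^{-1}$ and $1-(1-\phi)^{-1}$ are primitive so the rotated target array exists. By contrast, the ``any two imply the third'' computations, though they must be carried out for the specific pairing matching each summation variable, are routine field manipulations.
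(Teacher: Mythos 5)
Your proposal is correct and follows essentially the same route as the paper's proof: use \eqref{eq:identity} to show any two of the three indicator conditions force the third, collapse each projection via the ``exactly one value of the summation variable'' argument from \eqref{eq:star}, identify the surviving condition with a $G_3$ array via Theorem~\ref{thm:golomb3}, and in part~(ii) match Projections $B$ and $C$ to the reflected and rotated $G_3$ arrays by the substitutions $i \mapsto q-2-i$ and $(j,k) \mapsto (q-2-j,q-2-k)$ together with $\phi^{q-1} = (1-\phi)^{q-1} = (1-\phi^{-1})^{q-1} = 1$. The only differences are in level of detail (you write out the ``two imply the third'' field computations and the primitivity of $(1-\phi)^{-1}$ and $1-(1-\phi)^{-1} = (1-\phi^{-1})^{-1}$ explicitly, where the paper cites \eqref{eq:identity} more tersely), not in substance.
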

\begin{proof}
By Definition~\ref{defn:Costascube}, we must show that each of Projections $A$, $B$, $C$ for (i) and (ii) is the specified Costas array. 
\begin{enumerate}[(i)]
\item
Consider Projection $A = (a_{i,j}) = (\sum_k d_{i,j,k})$.
By \eqref{eq:identity}, any two of the three conditions in the indicator function of \eqref{eq:g3a} imply the third, so we may rewrite \eqref{eq:g3a} as
\[
  d_{i,j,k} = I[
  \phi^{i+1} + (1-\phi)^{j+1} = 1
  \quad\mbox{and}\quad 
  \phi^{-(i+1)} + (1-\phi^{-1})^{k+1} = 1].
\]
For $i, j \in \{1, 2,  \dots, q-3\}$, sum over $k$ to obtain
\[
 a_{i,j}= \sum_{k=1}^{q-3} I[
  \phi^{i+1} + (1-\phi)^{j+1} = 1
  \quad\mbox{and}\quad 
  \phi^{-(i+1)} + (1-\phi^{-1})^{k+1} = 1].
\]
Now $1-\phi^{-(i+1)} \in \F_q \setminus \{0,1,1-\phi^{-1}\}$, and $1-\phi^{-1}$ is primitive by assumption, so by \eqref{eq:star} there is exactly one value $k \in \{1, 2,  \dots, q-3\}$ for which $\phi^{-(i+1)} + (1-\phi^{-1})^{k+1} = 1$.
Therefore
\[
 a_{i,j} =  I[\phi^{i+1} + (1-\phi)^{j+1} = 1],
\]
and so Projection $A$ is a $G_3(q, \phi)$ Golomb Costas array by Theorem~\ref{thm:golomb3}.

Similar arguments apply to Projections
$B = (b_{i,k}) = (\sum_j d_{i,j,k})$ and 
$C = (c_{j,k}) = (\sum_i d_{i,j,k})$.
For Projection $C$, note that we may use \eqref{eq:identity} to rewrite the condition
$(1-\phi)^{-(j+1)} + (1-\phi^{-1})^{-(k+1)} = 1$ in the indicator function of \eqref{eq:g3a} as
$((1-\phi)^{-1})^{j+1} + (1 - (1-\phi)^{-1})^{k+1} = 1$.

\item
By \eqref{eq:identity}, any two of the three conditions in the indicator function of \eqref{eq:g3b} imply the third.
Let $i,j,k \in \{1,2,\dots,q-3\}$.
By similar arguments to those used to prove~(i), Projections $A=(a_{i,j})$, $B=(b_{i,k})$, $C=(c_{j,k})$ satisfy 
\begin{align*}
  a_{i,j} &= I[\phi^{i+1} + (1-\phi)^{j+1} = 1], \\
  b_{i,k} &= I[\phi^i + (1-\phi^{-1})^{k+1} = 1], \\
  c_{j,k} &= I[(1-\phi)^j + (1-\phi^{-1})^k = 1].
\end{align*}

Therefore Projection $A$ is a $G_3(q,\phi)$ Golomb Costas array by Theorem~\ref{thm:golomb3}.

The reflection $(r_{i,k})$ through a vertical axis of a $G_3(q,\phi^{-1})$ Golomb Costas array $(s_{i,k})$ is given by
\[
r_{i,k} = s_{q-2-i,k} = I[(\phi^{-1})^{q-1-i}+(1-\phi^{-1})^{k+1} = 1] 
= I[\phi^i+(1-\phi^{-1})^{k+1} = 1] = b_{i,k}, 
\]
and so Projection $B$ equals the reflection~$(r_{i,k})$.

The rotation $(r'_{j,k})$ through $180^\circ$ of a $G_3(q,(1-\phi)^{-1})$ Golomb Costas array $(s'_{j,k})$ is given by
\[
r'_{j,k} = s'_{q-2-j,q-2-k} = I[((1-\phi)^{-1})^{q-1-j}+(1-(1-\phi)^{-1})^{q-1-k} = 1].
\]
Therefore by \eqref{eq:identity},
\[
r'_{j,k} = I[(1-\phi)^j + (1-\phi^{-1})^k = 1] = c_{j,k},
\]
and so Projection~$C$ equals the rotation~$(r'_{j,k})$.
\end{enumerate}
\end{proof}

\begin{example}
\label{ex:decube}
We construct Costas cubes $D = (d_{i,j,k})$ and $E = (e_{i,j,k})$ of order $24$ according to Theorem~\ref{thm:g3g3g3cube}, using $q=27$. Represent $\F_{3^3}$ as $\Z_3[x]/\langle 1+2x^2+x^3 \rangle$ and take $\phi = 2+2x$, for which $1-\phi = 2+x$ and $1-\phi^{-1} = x+x^2$ are also primitive. 
 From \eqref{eq:g3a}, the triples $(i,j,k)$ for which $d_{i,j,k} = 1$ are given by
\[
\begin{array}{*{26}{c @{\hspace{2.7mm}}}}
$i$ & & 1 & 2 & 3 & 4 & 5 & 6 & 7 & 8 & 9 &10 &11 &12 &13 &14 &15 &16 &17 &18 &19 &20 &21 &22 &23 &24\\
$j$ & & 6 & 2 & 4 & 7 &20 &21 & 3 & 8 &18 &15 &14 &12 & 5 &23 &17 &24 &10 &19 & 9 &13 & 1 &16 &11 &22\\
$k$ & &21 & 2 & 7 & 3 &13 & 1 & 4 & 8 &19 &17 &23 &12 &20 &11 &10 &22 &15 & 9 &18 & 5 & 6 &24 &14 &16
\end{array}
\]
and the permutations corresponding to Projections $A$, $B$, $C$ of $D$ are given by
\[
\begin{array}{*{26}{c @{\hspace{2.72mm}}}}
$A$ & &21 & 2 & 7 & 3 &13 & 1 & 4 & 8 &19 &17 &23 &12 &20 &11 &10 &22 &15 & 9 &18 & 5 & 6 &24 &14 &16\\
$B$ & & 6 & 2 & 4 & 7 &20 &21 & 3 & 8 &18 &15 &14 &12 & 5 &23 &17 &24 &10 &19 & 9 &13 & 1 &16 &11 &12 \\
$C$ & &21 & 2 & 7 & 3 &13 & 1 & 4 & 8 &19 &17 &23 &12 &20 &11 &10 &22 &15 & 9 &18 & 5 & 6 &24 &14 &16
\end{array}
\]
 From \eqref{eq:g3b}, the triples $(i,j,k)$ for which $e_{i,j,k} = 1$ are given by
\[
\begin{array}{*{26}{c @{\hspace{2.49mm}}}}
$i$ & & 1 & 2 & 3 & 4 & 5 & 6 & 7 & 8 & 9 &10 &11 &12 &13 &14 &15 &16 &17 &18 &19 &20 &21 &22 &23 &24\\
$j$ & & 6 & 2 & 4 & 7 &20 &21 & 3 & 8 &18 &15 &14 &12 & 5 &23 &17 &24 &10 &19 & 9 &13 & 1 &16 &11 &22\\
$k$ & &16 &14 &24 & 6 & 5 &18 & 9 &15 &22 &10 &11 &20 &12 &23 &17 &19 & 8 & 4 & 1 &13 & 3 & 7 & 2 &21
\end{array}
\]
and the permutations corresponding to Projections $A$, $B$, $C$ of $E$ are given by
\[
\begin{array}{*{26}{c @{\hspace{2.39mm}}}}
$A$ & &21 & 2 & 7 & 3 &13 & 1 & 4 & 8 &19 &17 &23 &12 &20 &11 &10 &22 &15 & 9 &18 & 5 & 6 &24 &14 &16\\
$B$ & &19 &23 &21 &18 & 5 & 4 &22 &17 & 7 &10 &11 &13 &20 & 2 & 8 & 1 &15 & 6 &16 &12 &24 & 9 &14 & 3\\
$C$ & & 9 &11 & 1 &19 &20 & 7 &16 &10 & 3 &15 &14 & 5 &13 & 2 & 8 & 6 &17 &21 &24 &12 &22 &18 &23 &4
\end{array}
\]
\end{example}
A primitive element $\phi$ for which both $1-\phi$ and $1-\phi^{-1}$ are also primitive in $\F_q$ (as required in Theorem~\ref{thm:g3g3g3cube}) does not necessarily exist for a prime power $q$; for example, there is no such primitive element in $\F_{2^4}$.
If such a $\phi$ exists, then the order $q-2$ Costas cube $(f_{i,j,k})$ constructed in Theorem~\ref{thm:gggcube} with $(\phi,\rho,\psi) = (\phi,(1-\phi)^{-1},1-\phi^{-1})$ satisfies $f_{1,1,1}=1$, by \eqref{eq:identity}. The order $q-3$ Costas cube $(d_{i,j,k})$ of Theorem~\ref{thm:g3g3g3cube}~(i) can then be viewed as arising from the removal of the three planes of~$(f_{i,j,k})$ containing the position $(1,1,1)$ to form $(d_{i,j,k})=(f_{i+1,j+1,k+1})$.
Furthermore, the Costas cube $(e_{i,j,k})$ of Theorem~\ref{thm:g3g3g3cube}~(ii) can be obtained from $(d_{i,j,k})$ by the rule 
\[
e_{i,j(i),k(i)}=1 \quad \mbox{if and only if} \quad d_{i, j(i), k(q-2-i)} = 1
\]
(as illustrated in Example~\ref{ex:decube}). 
Application of this rule does not necessarily preserve the Costas cube property,
and so $(d_{i,j,k})$ and $(e_{i,j,k})$ are inequivalent Costas cubes in general (even though Theorem~\ref{thm:g3g3g3cube} shows that Projections $A$ of these two cubes are identical, Projections B are equivalent Costas arrays, and Projections C are equivalent Costas arrays).

Table~\ref{tab:constructions} displays, for $n \le 29$, the number of equivalence classes of Costas cubes of order~$n$ constructed by Theorems~\ref{thm:gggcube}, \ref{thm:gw2w2cube}, and~\ref{thm:g3g3g3cube}, and for comparison the total number of equivalence classes of Costas cubes of order~$n$. By direct verification, all equivalence classes of Costas cubes of order at most 29 for which all three Projections $A$, $B$, $C$ are $G_2$ Golomb Costas arrays are constructed by Theorem~\ref{thm:gggcube}; all equivalence classes of Costas cubes of order at most 29 for which two of Projections $A$, $B$, $C$ are equivalent to $W_2$ Gilbert-Welch Costas arrays and the third Projection is a Golomb Costas array are constructed by Theorem~\ref{thm:gw2w2cube}; and all equivalence classes of Costas cubes of order greater than 2 and at most 29 for which all three Projections $A$, $B$, $C$ are equivalent to $G_3$ Golomb Costas arrays are constructed by Theorem~\ref{thm:g3g3g3cube}.

Table~\ref{tab:constructions} shows that the existence of Costas cubes of order 2, 3, 4, 20, 21, 24, 25, and 27 is completely explained by Theorems~\ref{thm:gggcube},\ref{thm:gw2w2cube}, and~\ref{thm:g3g3g3cube}. It would be interesting to find an explanation for the existence of the equivalence classes of Costas cubes not constructed by these three theorems, which would allow us to explain the existence of those associated Costas array projections that are currently regarded as sporadic.

\begin{table}[h]
\centering
\begin{tabular}{|c|c|c|c|c|}
\hline
Order 		& \# equivalence 		& \# equivalence 		& \# equivalence 		& Total \# 		\\ 
\hspace{4em}	& classes of Costas 		& classes of Costas 		& classes of Costas 		& equivalence classes 	\\
		& cubes constructed 		& cubes constructed 		& cubes constructed 		& of Costas cubes	\\
		& by Theorem~\ref{thm:gggcube}	& by Theorem~\ref{thm:gw2w2cube}& by Theorem~\ref{thm:g3g3g3cube}&			\\ \hline
2		& 1 				&   				&				& 1			\\
3		& 1 				& 1  				&				& 1			\\
4		&				&				& 2				& 2			\\
5		& 1				& 1				& 2				& 13			\\
6		& 4 				&  				&				& 47			\\
7		& 2 				&  				&				& 30			\\
9		& 4 				& 3				&				& 46			\\
11		& 4  				& 3				&				& 66			\\
14		& 5  				&  				&				& 6			\\
15		& 20  				& 10				&				& 33			\\
17		& 10 				& 6				&				& 19			\\
20		&				&				& 2				& 2			\\
21		& 35 				& 15				&				& 50			\\
23		& 10 				&  				&				& 11			\\
24		&				&				& 2				& 2			\\
25		& 20				&  				&				& 20			\\
27		& 56				& 21 				&				& 77			\\
29		& 20				& 10 				& 2				& 33			\\ \hline
\end{tabular}
\caption{Inequivalent Costas cubes constructed by Theorems~\ref{thm:gggcube}, \ref{thm:gw2w2cube}, and~\ref{thm:g3g3g3cube} (the order 3 Costas cube arises under Theorem~\ref{thm:gggcube} and also under Theorem~\ref{thm:gw2w2cube})}
\label{tab:constructions}
\end{table}

\section*{Acknowledgements}
We are grateful to Ladislav Stacho and Luis Goddyn, whose insightful questions and comments at the SFU Discrete Mathematics seminar in March 2017 led us to discover Theorem~\ref{thm:g3g3g3cube}. We thank the referees for their helpful comments.

\end{document}